\title{Metrizability of Mahavier products indexed by partial orders}
\author{Steven Clontz and Jacob Dunham}
\newcommand{\term}{\textit}
\newcommand{\tuple}[1]{\left\langle{#1}\right\rangle}
\newcommand{\maProd}[1]{{\mathbf{M}}\tuple{#1}}
\newcommand{\concat}{^{\frown}}
\newcommand{\finSubseteq}{\subseteq_{\mathrm{finite}}}
\newcommand{\lof}[1]{{^{<}#1}}
\newcommand{\rof}[1]{{#1^{<}}}
\newcommand{\leof}[1]{{^{\leq}#1}}
\newcommand{\reof}[1]{{#1^{\leq}}}
\newcommand{\nleof}[1]{{^{\not\leq}#1}}
\newcommand{\nreof}[1]{{#1^{\not\leq}}}
\newcommand{\lofs}[2]{{^{#2}#1}}
\newcommand{\rofs}[2]{{#1^{#2}}}
      \theoremstyle{plain}
      \newtheorem{theorem}{Theorem}
      \newtheorem{lemma}[theorem]{Lemma}
      \newtheorem{corollary}[theorem]{Corollary}
      \newtheorem{proposition}[theorem]{Proposition}
      \newtheorem{question}[theorem]{Question}
      \theoremstyle{definition}
      \newtheorem{definition}[theorem]{Definition}
      \newtheorem{notation}[theorem]{Notation}
      \newtheorem{example}[theorem]{Example}
      \newtheorem{remark}[theorem]{Remark}
\begin{document}
\maketitle

\begin{abstract}
Let \(X\) be separable metrizable, and let
\(f\subseteq X^2\) be a non-trivial relation on \(X\).
For a given partial order \(\tuple{P,\leq}\),
the Mahavier product \(\maProd{X,f,P}\subseteq X^P\)
(also known as a generalized inverse limit)
collects functions such that \(x(p)\in f(x(q))\) for all
\(p\leq q\). Clontz and Varagona previously showed
for well orders \(P\)
that \(\maProd{X,f,P}\) is separable metrizable
exactly when \(P\) is countable and \(f\) satisfies condition \(\Gamma\); we extend this
result to hold for all partial orders.
\end{abstract}

\section{Introduction}

Let \(X\) be a separable metrizable topological space, let
\(f\subseteq X^2\) be a relation on \(X\),
and let \(Q\) be a set preordered by (reflexive and
transitive) \(\preceq\).

Extending work done in e.g. \cite{M5,Char}
we consider the subspace
\(\maProd{X,f,Q}\) of \(X^Q\) where
\(x(p)\in f(x(q))\) for all \(p\preceq q\).
Such subspaces are often known as generalized
inverse limits, or as we will refer to them,
Mahavier products.
For an introduction to such structures,
often considered in the context of continuum
theory, we direct the reader to \cite{MR3014043}.
Much of the literature on this subject
considers only simple indices,
particularly \(Q=\mathbb N\) or \(\mathbb Z\)
with its usual order.

It is immediate that any subspace of \(X^Q\)
is separable metrizable whenever \(Q\) is countable.
In \cite{CLONTZVARAGONA,GitC} the authors
consider whether \(\maProd{X,f,Q}\) might be metrizable
when \(Q\) is an uncountable well-order. It turns
out that, except in trivial situations, the answer
is no.

We aim to extend this result to the more general
case where the index set of the Mahavier product is any partial order, that is,
any antisymmetric preorder. To do this,
we utilize the notion of a 
partially-ordered topological
space (POTS) originally
defined in \cite{1954}, a generalization
of the class of linearly ordered
topological spaces (LOTS) studied broadly
throughout general topology.

\section{Partially-Ordered Topological
Spaces}

For convenience, we formally define the notion
of a preorder and partial order here.

\begin{definition}
A \term{preorder} \(\tuple{Q,\preceq}\) is a
set paired with a
reflexive (\(x\preceq x\)) and transitive
(\(x\preceq y\preceq z\Rightarrow x\preceq z\))
relation.
\end{definition}

We adopt the convention of using \(Q\) for
preorders as they are sometimes known as
\term{quasi-orders}.

\begin{definition}
A \term{partial order} \(\tuple{P,\leq}\) is
a preorder that is antisymmetric
(\(x<y\Rightarrow y\not<x\)).
\end{definition}

We note that every preorder admits a natural
partial order.

\begin{proposition}
Let \(Q\) be preordered by \(\preceq\), and set
\(p\sim q\) if and only if \(p\preceq q\) and
\(q\preceq p\). Then \(\sim\) is an equivalence
relation, and its set of equivalence classes
\(P=\{[p]:p\in Q\}\) ordered by
\(A\leq B\) if and only if \(p\preceq q\)
for all \(p\in A,q\in B\)
is a partial order.
\end{proposition}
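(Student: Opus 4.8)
The plan is to verify the axioms one layer at a time: first that $\sim$ is an equivalence relation, and then that the induced relation $\leq$ on the quotient $P$ is reflexive, transitive, and antisymmetric. For the equivalence-relation part I would note that reflexivity of $\sim$ is immediate from reflexivity of $\preceq$; that symmetry is automatic, since the defining condition ``$p\preceq q$ and $q\preceq p$'' is a conjunction symmetric in its two arguments; and that transitivity follows by applying transitivity of $\preceq$ separately to the two chains $p\preceq q\preceq r$ and $r\preceq q\preceq p$ extracted from $p\sim q$ and $q\sim r$. This yields that $Q$ is partitioned into the classes $[p]$, each nonempty since $p\in[p]$, and $P$ is precisely this set of classes.

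Before touching the order axioms I would isolate the one genuinely load-bearing observation: although $A\leq B$ is phrased with a quantifier over \emph{all} representatives, it is equivalent to the existence of a \emph{single} pair $p_0\in A$, $q_0\in B$ with $p_0\preceq q_0$. Indeed, given such a pair and arbitrary $p\in A$, $q\in B$, from $p\sim p_0$ we get $p\preceq p_0$ and from $q\sim q_0$ we get $q_0\preceq q$, so $p\preceq p_0\preceq q_0\preceq q$, and transitivity of $\preceq$ delivers $p\preceq q$. I expect this reduction to be the crux of the argument; everything after it is bookkeeping.

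With the reduction in hand I would finish as follows. Reflexivity of $\leq$: for $A\in P$ and any $p,p'\in A$ we have $p\sim p'$, hence $p\preceq p'$, so $A\leq A$. Transitivity: if $A\leq B$ and $B\leq C$, choose any $p\in A$, $q\in B$, $r\in C$ (possible since the classes are nonempty); then $p\preceq q\preceq r$ gives $p\preceq r$, and the reduction upgrades this single instance to $A\leq C$. Antisymmetry: if $A\leq B$ and $B\leq A$, pick $p\in A$ and $q\in B$; then $p\preceq q$ and $q\preceq p$, so $p\sim q$, i.e. $[p]=[q]$, whence $A=[p]=[q]=B$.

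I do not anticipate a real obstacle here; the only points that need care are invoking nonemptiness of the classes so that the defining quantifier is never vacuously satisfied in a way that would sabotage antisymmetry, and routing the order-axiom verifications through the single-representative reduction rather than manipulating the full ``for all representatives'' statement directly.
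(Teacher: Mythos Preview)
Your argument is correct and complete. The paper itself does not supply a proof of this proposition---it is stated as a standard fact and left to the reader---so there is nothing to compare against beyond noting that your verification of the equivalence-relation axioms and of reflexivity, transitivity, and antisymmetry for the quotient order is sound; the single-representative reduction you isolate is exactly the right device, and your remark about nonemptiness of the classes correctly forestalls any vacuous-quantifier issue in the antisymmetry step.
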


\begin{notation}
For each preorder \(\tuple{Q,\preceq}\) we denote the partial
order given in the previous proposition by
\(\tuple{Q^\star,\leq}\).
\end{notation}

As we will see, for our purposes
we can use the partial order \(Q^\star\)
in place of any preorder \(Q\), so we now only consider
partial orders.

\begin{notation}
Let \(\tuple{P,\leq}\) be a partial order.
Then we adopt the following notation similar to
that commonly used for linear orders; for example:
\[(\leftarrow,p)=\lof p=\{r\in P:r<p\}\]
\[[p,\rightarrow)=\reof p=\{r\in P:r\geq p\}\]
\[(p,q]=\rof p\cap \leof q\]
\[\nreof p = P\setminus \reof p\]
\end{notation}

Of course, if a partial order is \term{total},
that is, \(p\leq q\) or \(q\leq p\) for all \(p,q\),
then we have the usual idea of a \term{total order}
or \term{linear order}, and e.g.
\(\nleof p=\rof p\). A wide class of topological
spaces known as \term{LOTS}
are defined in terms of linear orders; we
characterize them for all partial orders as follows.

\begin{definition}
A \term{partially ordered topological space}, or \term{POTS},
is a topological space \(X\) partially ordered by
\(\leq\) with a subbasis
\(\{\nleof x :x\in X\}\cup\{\nreof x:x\in X\}\).
\end{definition}

The reader may verify that if \(\leq\) is a linear order,
then this subbasis yields the usual
basis of open intervals
\(\{(x,y):x,y\in X\cup\{\leftarrow,\rightarrow\}\}\).
However,
the next example illustrates that a POTS may not even be
Hausdorff (though it will always be \(T_1\)).

\begin{example}
Let
\([\omega]^{<\aleph_0}=\{F\subseteq\omega:F\text{ is finite}\}\)
be partially ordered by \(\subseteq\).

We then let \(F\subseteq \omega\) be finite, and thus for any \(x\not\in F\), 
the singleton set \(\{x\}\in \lofs{F}{\not\subseteq}\).
Consider now the subbasic 
open set \(\rofs{F}{\not\subseteq}\). If \(F = \emptyset\) then
\(\rofs{F}{\not\subseteq} = \emptyset\) as well, and if \(F = \{y\}\)
then \(\{x\}\in \rofs{F}{\not\subseteq}\)
for all \(x\in \omega\setminus \{y\}\). 
Finally, if \(|F| > 1\), then \(\{x\}\in \rofs{F}{\not\subseteq}\) for all 
\(x\in \omega\). Thus, any two non-empty subbasic open sets have 
infinite intersection, showing 
\(\tuple{[\omega]^{<\aleph_{0}},\subseteq}\) cannot be Hausdorff.
\end{example}

To address this, Ward \cite{1954} gives the following definition
for ``continuous'' partial orders; some authors
\cite{gierz_hofmann_keimel_lawson_mislove_scott_2003} require
this as a condition of being a ``pospace.''

\begin{definition}
A POTS is said to be \term{continuous}
if for each \(p\not\leq q\) there exist open sets
\(U,V\) such that \(p\in U,q\in V\) and
\(r\not\leq s\) for all \(r\in U,s\in V\).
\end{definition}

Note that
the definition immediately implies the Hausdorff property,
and may be recharacterized as follows.

\begin{definition}
A subset \(A\) of a partial order \(\leq\) is said to be \term{downward}
if for all \(p\in A\), \(\leof p\subseteq A\).
A subset \(A\) of a partial order is said to be \term{upward}
if for all \(p\in A\), \(\reof p \subseteq A\).
\end{definition}

\begin{lemma}[\cite{1954}]\label{contPots}
The following are equivalent for a given POTS
\(\tuple{P,\leq}\).
\begin{itemize}
\item \(P\) is continuous.
\item For each \(p\not\leq q\), there exists an upward open
neighborhood \(U\) of \(p\) and a downward open neighborhood \(V\)
of \(q\) such that \(U,V\) are disjoint.
\end{itemize}
\end{lemma}

\section{Mahavier Products with Partial Orders}

We may define the Mahavier product as a subspace
of the usual Tychonoff product. By convention,
we will treat relations \(f\subseteq X^2\)
as set-valued functions, that is,
\(f(x)=\{y:\tuple{x,y}\in f\}\).

\begin{definition}
Let \(X\) be a topological space.
A relation \(f\subseteq X^2\) is said to be
a \term{\(V\)-relation} if it
is closed,
idempotent (\(f(f(x))=f^2(x)=f(x)\)),
surjective (\(\forall y\in X\exists x\in X(y\in f(x))\)),
and serial (\(\forall x\in X\exists y\in X(y\in f(x))\)).
\end{definition}

\begin{definition}
Let \(X\) be a topological space,
\(f\subseteq X^2\) be a \(V\)-relation,
and \(Q\) be a preorder. Then
\[\maProd{X,f,Q}=\{x\in X^Q:x(p)\in f(x(q))
\text{ for all }p\preceq q\}\]
is the \term{Mahavier product}, considered
as a subspace of the usual Tychonoff product
\(X^Q\).
\end{definition}

When \(Q=\mathbb N\) and \(f\) is singleton-valued,
then \(\maProd{X,f,Q}\)
is the standard topological inverse limit with
bonding map \(f\).
This definition may be also contrasted with other
interesting subspaces of \(X^Q\)
studied in general topology,
such as the \(\Sigma\)-product. (We pose
a question relating to compact subspaces of 
\(\Sigma\)-product, known as Corson compacta,
at the end of the paper.)

In \cite{GitC}, the first author and Varagona
showed the following.

\begin{definition}
A relation \(f\subseteq X^2\) satisfies
\term{condition \(\Gamma\)} if there exist distinct
\(x,y\in X\) such that
\(\tuple{x,x},\tuple{x,y},\tuple{y,y}\in f\).
\end{definition}

\begin{lemma}
Let \(X\) be weakly countably compact and
\(f\not=\iota=\{\tuple{x,x}:x\in X\}\)
(that is, \(f\) is \term{nontrivial}) be
a \(V\)-relation. Then \(f\) satisfies
condition \(\Gamma\).
\end{lemma}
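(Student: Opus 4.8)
The plan is to distill a small order‑theoretic skeleton out of $f$ and then run a minimal–invariant–set argument twice. First I would record that $R:=\iota\cup f$ is a preorder: reflexivity is built in, and transitivity is a three‑case check whose only nontrivial case, $y\in f(x)$ and $z\in f(y)$, yields $z\in f^{2}(x)=f(x)$ by idempotence. Two cases then settle at once. If $R$ is not antisymmetric, choose $x\ne y$ with $x\,R\,y$ and $y\,R\,x$; then $y\in f(x)$ and $x\in f(y)\subseteq f^{2}(x)=f(x)$, and symmetrically $y\in f(y)$, so $\langle x,x\rangle,\langle x,y\rangle,\langle y,y\rangle\in f$ witness condition $\Gamma$. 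If instead $\iota\subseteq f$ but $f\ne\iota$, pick $x\ne y$ with $y\in f(x)$; since $\langle x,x\rangle,\langle y,y\rangle\in\iota\subseteq f$, this again witnesses $\Gamma$. Hence I may assume $R$ is a partial order and fix $a_{0}$ with $a_{0}\notin f(a_{0})$.

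The engine is: \emph{every $V$-relation $g$ on a nonempty weakly countably compact space $Y$ has a $g$-reflexive point.} To see this, consider the family of nonempty closed $K\subseteq Y$ with $g(K)\subseteq K$; it contains $Y$, and the intersection of a $\subseteq$-chain from it is again closed and forward‑closed under $g$, so — provided such intersections are nonempty — Zorn yields a minimal member $K$. For any $y\in K$ the slice $g(y)$ is nonempty (seriality), closed (a slice of the closed relation $g$), contained in $g(K)\subseteq K$, and forward‑closed because $g(g(y))=g^{2}(y)=g(y)$; minimality then forces $g(y)=K$, so $y\in K=g(y)$ is $g$-reflexive. The nonemptiness of chain intersections is precisely where weak countable compactness enters: for a descending $\omega$-chain of nonempty closed sets one picks points and takes an accumulation point, which lands in every term (using the $T_{1}$ property present in every space relevant here); for separable metrizable $X$ this is moot, since there weak countable compactness is compactness and one just invokes the finite intersection property. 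I expect this topological bookkeeping — and in particular pinning down how far the descending transfinite recursion of slices can run in the general setting — to be the only real obstacle in the whole argument; everything else is formal.

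With the engine available the conclusion is short. The set $Y_{+}:=f(a_{0})$ is nonempty and closed, weakly countably compact as a closed subspace, and $f$-invariant since $f(Y_{+})=f^{2}(a_{0})=f(a_{0})$, so $f$ restricted to $Y_{+}$ is a $V$-relation and the engine produces a reflexive $r\in f(a_{0})$. Dually $f^{-1}$ is a $V$-relation — closedness, seriality/surjectivity, and idempotence (via $(f^{-1})^{2}=(f^{2})^{-1}$) are all self‑dual — and $f^{-1}(a_{0})\ne\emptyset$ by surjectivity, so the engine applied to $f^{-1}$ on $f^{-1}(a_{0})$ yields a reflexive $s$ with $a_{0}\in f(s)$. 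Now $s\,R\,a_{0}\,R\,r$, hence $s\,R\,r$; since $a_{0}$ is not reflexive while $r,s$ are, we have $s\ne a_{0}\ne r$, and $s\ne r$ as well, for $s=r$ would give $s\,R\,a_{0}\,R\,s$ and thus $s=a_{0}$ by antisymmetry. Therefore $s\ne r$ and $r\in f(s)$, so $\langle s,s\rangle,\langle s,r\rangle,\langle r,r\rangle\in f$, and $f$ satisfies condition $\Gamma$.
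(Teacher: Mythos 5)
First, a point of reference: the paper does not actually prove this lemma --- it is imported from \cite{GitC} --- so your proposal can only be compared against the argument given there. Your overall architecture is sound and, in its endgame, correct: reducing to the case where \(R=\iota\cup f\) is a partial order with an irreflexive point \(a_0\) (the two discharged cases are handled correctly), producing an \(f\)-reflexive point \(r\in f(a_0)\) and an \(f^{-1}\)-reflexive point \(s\in f^{-1}(a_0)\), and assembling \(\tuple{s,s},\tuple{s,r},\tuple{r,r}\in f\) with \(s\neq r\) via idempotence and antisymmetry is a clean way to finish.

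The genuine gap is exactly the one you flag and then leave unresolved: the Zorn's lemma proof of your ``engine.'' Weak countable compactness only guarantees that \emph{countable} descending chains of nonempty closed sets have nonempty intersection; a chain arising in Zorn's lemma may be uncountable, and then the intersection can be empty --- the tails \([\alpha,\omega_1)\) in the countably compact space \(\omega_1\) are the standard counterexample. So the minimal invariant set \(K\) need not exist under the stated hypotheses, and the engine is only established when \(X\) is actually compact (which covers the paper's application to separable metrizable \(X\), but not the lemma as stated). The repair is to drop Zorn entirely and run a countable forward orbit: starting from \(a_0\), choose \(y_{n+1}\in f(y_n)\) by seriality, so that idempotence gives \(y_m\in f(y_n)\) for all \(m>n\); if the orbit is finite some point repeats and is reflexive (and lies in \(f(a_0)\)), while otherwise an accumulation point \(y\) of the orbit lies in each closed slice \(f(y_n)\) and satisfies \(\tuple{y,y}\in\overline{\{\tuple{y_n,y_m}:n<m\}}\subseteq f\), yielding a reflexive point of \(f(a_0)\) directly (dually for \(f^{-1}\)), modulo the same \(T_1\) caveat you already note. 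This countable-orbit argument is essentially how the cited source proceeds, and your duality trick and final antisymmetry computation can be kept verbatim on top of it. A minor further remark: the engine never uses surjectivity of the restricted relation, so you need not verify that \(f\cap f(a_0)^2\) is a full \(V\)-relation (though by idempotence it is).
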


To this end, we will concentrate on the following
particular Mahavier product.

\begin{definition}
Let \(X\) contain distinct elements \(0,1\).
Then \(\gamma_X\subseteq X^2\) satisfies
\(\gamma_X(0)=X\) and \(\gamma_X(x)=\{1\}\) otherwise.
\end{definition}

Of course, if \(X\) is \(T_1\) then \(\gamma_X\) is a \(V\)-relation 
that satisfies condition \(\Gamma\).
When \(X\) is assumed from context, we simply write \(\gamma\).

\begin{proposition}
Let \(Y\subseteq X\) and \(g\subseteq f\).
Then \(\maProd{Y,g,Q}\subseteq\maProd{X,f,Q}\).

In particular,
let \(2\subseteq X\) and \(f\) satisfy condition \(\Gamma\)
(that is, \(\gamma=\gamma_2\subseteq f\)).
Then \(\maProd{2,\gamma,Q}\subseteq\maProd{X,f,Q}\).
\end{proposition}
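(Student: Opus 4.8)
The plan is to unwind the definitions directly: both assertions reduce to the observation that the defining constraint of \(\maProd{Y,g,Q}\) is formally stronger than that of \(\maProd{X,f,Q}\), together with the standard fact that the subspace topology is transitive. (Implicitly we assume here that \(g\) is a \(V\)-relation on \(Y\) and \(f\) a \(V\)-relation on \(X\), so that both Mahavier products are defined.)

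First I would take an arbitrary \(x\in\maProd{Y,g,Q}\), so that \(x\in Y^Q\) and \(x(p)\in g(x(q))\) for all \(p\preceq q\). Since \(Y\subseteq X\) we have \(x\in X^Q\), and since \(g\subseteq f\) we have \(g(z)\subseteq f(z)\) for every \(z\), whence \(x(p)\in f(x(q))\) for all \(p\preceq q\); thus \(x\in\maProd{X,f,Q}\). This establishes the inclusion at the level of underlying sets.

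Next I would verify that the topologies agree. The product \(Y^Q\) carries the subspace topology inherited from \(X^Q\), and \(\maProd{Y,g,Q}\) carries the subspace topology inherited from \(Y^Q\); by transitivity of the subspace topology, \(\maProd{Y,g,Q}\) therefore has the subspace topology inherited from \(X^Q\), which is exactly the topology placed on \(\maProd{X,f,Q}\) and hence on the subset in question. So the inclusion is topological, not merely set-theoretic.

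The ``in particular'' clause is then just the instance \(Y=2\), \(g=\gamma\) of the general statement: the stated hypotheses \(2\subseteq X\) and \(\gamma\subseteq f\) are precisely what is needed (recall \(\gamma\) is a \(V\)-relation on \(2\)). I do not expect any real obstacle; the only step meriting a second's thought is the transitivity-of-subspaces observation, which is what upgrades the set inclusion to a genuine subspace inclusion.
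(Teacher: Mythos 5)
Your proof is correct and is exactly the routine unwinding of definitions one would expect; the paper in fact states this proposition without proof, treating it as immediate. Your added care about the transitivity of the subspace topology and the implicit requirement that \(g\) be a \(V\)-relation on \(Y\) (so that \(\maProd{Y,g,Q}\) is defined) is sound and does not change the substance.
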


Recalling our motivation, we want to characterize
the separable metrizability of the Mahavier product \(\maProd{X,f,P}\)
(for separable metrizable \(X\) and 
nontrivial \(f\) satisfying condition \(\Gamma\)).
Since \(\maProd{X,f,P}\) is trivially separable and
metrizable when \(P\) is countable, we will consider
the case for uncountable \(P\). In particular,
we will show that the subspace \(\maProd{2,\gamma,P}\)
fails to be second-countable in this case.

First consider the following simplification for preorders
that allows us to concentrate on partial orders.
We use the symbol \(\cong\) to denote homeomorphic
spaces.

\begin{proposition}
Let \(Q\) be preordered by \(\preceq\).
Then \(\maProd{2,\gamma,Q}\cong\maProd{2,\gamma,Q^\star}\).
\end{proposition}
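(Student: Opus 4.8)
The plan is to realize the claimed homeomorphism via the canonical quotient map $\pi\colon Q\to Q^\star$ given by $\pi(p)=[p]$. The first preliminary step is to record that for $p,q\in Q$ one has $p\preceq q$ if and only if $[p]\leq[q]$ in $Q^\star$: the reverse implication is immediate from the definition of $\leq$ on $Q^\star$ (apply it to the representatives $p$ and $q$), and the forward implication follows because $p'\preceq p\preceq q\preceq q'$ whenever $p'\sim p$ and $q'\sim q$. The second preliminary step is to unwind the defining condition of the Mahavier product: since $\gamma(0)=2$ and $\gamma(1)=\{1\}$, the requirement $x(p)\in\gamma(x(q))$ is equivalent to the implication $x(q)=1\Rightarrow x(p)=1$; thus $x\in\maProd{2,\gamma,Q}$ precisely when $x^{-1}(1)$ is a downward subset of $Q$, and likewise for $Q^\star$.

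I would then define $\Phi\colon\maProd{2,\gamma,Q^\star}\to 2^Q$ by $\Phi(y)=y\circ\pi$ and verify the following. First, $\Phi$ takes values in $\maProd{2,\gamma,Q}$: if $p\preceq q$ then $[p]\leq[q]$, so $y([p])\in\gamma(y([q]))$, which is exactly $\Phi(y)(p)\in\gamma(\Phi(y)(q))$. Second, $\Phi$ is injective, since $\pi$ is surjective. Third, $\Phi$ maps onto $\maProd{2,\gamma,Q}$: given $x\in\maProd{2,\gamma,Q}$, the key observation is that $x$ is constant on each $\sim$-class---if $p\sim q$, then both $p\preceq q$ and $q\preceq p$ hold, and feeding these into the defining condition forces $x(p)=1\iff x(q)=1$---so $x$ factors as $x=y\circ\pi$ for a well-defined $y\colon Q^\star\to 2$; running the implication of the previous step backwards shows $y\in\maProd{2,\gamma,Q^\star}$, and $\Phi(y)=x$.

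Finally I would check that $\Phi$ is a homeomorphism onto $\maProd{2,\gamma,Q}$. Continuity of $\Phi$ follows from the universal property of the product topology: for each $q\in Q$, the $q$-th coordinate of $\Phi(y)$ is the $[q]$-th coordinate of $y$, so each coordinate of $\Phi$ is continuous. The inverse $\Phi^{-1}$ is continuous by the same argument, since the $[p]$-th coordinate of $\Phi^{-1}(x)$ is the $p$-th coordinate of $x$ for any representative $p$ of $[p]$. (Alternatively, one may observe that both Mahavier products are closed, hence compact, subspaces of the respective compact Hausdorff cubes, so a continuous bijection between them is automatically a homeomorphism.) There is no substantial obstacle in this argument; the only points that require genuine care are the equivalence $p\preceq q\iff[p]\leq[q]$ and the verification that members of the Mahavier product are constant on $\sim$-classes, since it is precisely this that makes the factorization through $\pi$ possible.
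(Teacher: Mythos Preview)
Your argument is correct and follows essentially the same route as the paper: the paper defines the inverse of your $\Phi$ by $f(x)([p])=x(p)$, observes (as you do) that members of $\maProd{2,\gamma,Q}$ are constant on $\sim$-classes so that this is well defined, and then leaves the remaining verifications to the reader. You have simply carried those verifications out in full.
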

\begin{proof}
Note that for any \(x\in \maProd{2,\gamma,Q}\), if \(x(p) = 1\)
then \(x(q) = 1\) for all \(q\sim p\), due to \(q\preceq p\).
Likewise, if \(x(p)=0\), then \(x(q)=0\)
for all \(q\sim p\), due to \(p\preceq q\).
Thus
\(h:\maProd{2,\gamma,Q} \rightarrow \maProd{2,\gamma,Q^\star} \)
defined by \(h(x)([p]) = x(p)\) (where \([p]\in Q^\star\)
is the equivalence
class of \(p\in Q\)) is a well-defined homeomorphism.
\end{proof}

Many properties of \(\maProd{X,f,P}\)
are inherited from its superspace \(X^P\); for example,
it is \(T_3\) provided \(X\) is.
The following result shows that compactness is also preserved.

\begin{proposition}
Let \(f\subseteq X^2\) be closed. Then
\(\maProd{X,f,P}\) is a closed subspace of \(X^P\).
\end{proposition}

\begin{proof}
Let \(l\not\in\maProd{X,f,P}\). Then there
exist \(p<q\) such that \(l(p)\not\in f(l(q))\).
So choose open subsets \(U,V\) of \(X\) such that
\(\tuple{l(q),l(p)}\in U\times V\subseteq X^2\setminus f\).
Then \(\prod_{r\in P}U_r\) defined by \(U_p=V\), \(U_q=U\),
and \(U_r=X\) otherwise, is an open neighborhood of
\(l\) and disjoint from \(\maProd{X,f,P}\).
\end{proof}

\begin{corollary}
If \(X\) is compact and \(f\) is a \(V\)-relation, then
\(\maProd{X,f,P}\) is compact.
\end{corollary}

\section{Two illustrative examples}

The following is an example where \(X\)
is a compact metrizable space, 
\(P\) is a linear order, and
\(\maProd{X,\gamma,P}\) is a familiar
continuum (albeit non-metrizable,
as will follow later from
the fact that \(P\) is uncountable).

\begin{example}
Let \(X=[0,1]\) and \(P=[0,1]\) with the
usual (linear) order. Then
\(\maProd{X,\gamma,P}\) is a copy of
the lexicographic square.
\end{example}

\begin{proof}
The lexicographic square is given by
\(L=[0,1]^2\) where \(\tuple{x,y}<\tuple{w,z}\)
if and only if \(x<w\), or both \(x=w\)
and \(y<z\). This space is well-known to
be non-metrizable (this follows from the
fact that it is compact but non-separable). 

For \(f\in\maProd{X,\gamma,P}\)
let \(x_f=\inf\{x:f(x)=0\}\). It follows that
\(f(x)=1\) for \(x<x_f\) and
\(f(x)=0\) for \(x>x_f\), and \(f=g\) if and
only if \(x_f=x_g\) and \(f(x_f)=g(x_g)\).

So let \(\theta:\maProd{X,\gamma,P}\to L\)
be defined by \(\theta(f)=\tuple{x_f,f(x_f)}\).
It follows that \(\theta\) is a bijection.
We will show that \(\theta\) is a homeomorphism.

To see this, consider a nonempty subbasic open set
\(U=\prod_{x\in[0,1]}U_x\) containing \(f\), where
each \(U_x=[0,1]\) except for a single
\(x'\in X\). First assume \(U_{x'}=(y',1]\).
It follows that \(f(x)=1\) for all \(x<x'\).
We claim \(\theta[U]=(\tuple{x',y'},\rightarrow)\).

To see this, first note that 
\(\theta(f)>\tuple{x',y'}\) follows from
\(x_f\geq x'\), and if \(x_f=x'\) then
\(f(x_f)=f(x')>y'\). Therefore
\(\theta[U]\subseteq(\tuple{x',y'},\rightarrow)\).

Likewise if \(\tuple{a,b}>\tuple{x',y'}\),
note \(x'\leq a\) and \(x'=a\Rightarrow b>y'\), and
define \(g\) by
\[
g(x)=
\begin{cases}
1 & \text{if } x<a \\
b & \text{if } x=a \\
0 & \text{if } x>a \\
\end{cases}
.\]
Then \(x_g=a\), and
\(\theta(g)=\tuple{x_g,g(x_g)}=
\tuple{a,b}\). Finally
\(g\in U\) since \(x'<a\) implies \(g(x')=1\geq y'\)
and \(x'=a\) implies
\(g(x')=g(a)=b>y'\).
Thus
\(\theta[U]\supseteq(\tuple{x',y'},\rightarrow)\),
completing the proof of our claim.

This argument may be adapted to show
\(U_{x'}=[0,y')\) implies
\(\theta[U]=(\leftarrow,\tuple{x',y'})\).
Therefore, \(\theta\) is an open continuous
map, so \(\theta\) is a homeomorphism.
\end{proof}

The following example illustrates an interesting
space produced as the Mahavier
product of a non-linear partial order;
since \(P\) is countable, this is a
metrizable subspace of \([0,1]^P\).

\begin{example}
Let \(2^{<\omega}\) be the Cantor tree of
finite sequences of \(0\) and \(1\) ordered by
extension.
An illustration of a typical
element \(f\in\maProd{[0,1],\gamma,2^{<\omega}}\)
and its
neighborhood is given in Figure \ref{maCantor}.
Each filled dot represents \(s\in2^{<\omega}\),
where each \(0\) in the sequence moves northwest and each
\(1\) in the sequence moves northeast.
If the tree meets the dot for \(s\) then \(f(s)=1\); if the tree
meets \(s\) but not \(s\concat\tuple{n}\) then 
\(0\leq f(s\concat\tuple{n})<1\). A neighborhood allows for
\(\varepsilon\) error on each coordinate not determined by
\(\gamma\) (represented by the open dots and thinner lines).
\end{example}

\begin{figure}
\begin{center}
\begin{tikzpicture}
\draw[black, thick] (0,0)--(0,1);
\draw[black, thick] (0,1)--(1,2);
\draw[black, thick] (0,1)--(-1,2);
\draw[black, thick] (-1,2)--(-1.3,2.6);
\draw[blue] (-1.2,2.4) circle (1pt);
\draw[blue] (-1.4,2.8) circle (1pt);
\draw[blue] (-1.2,2.4)--(-1.4,2.8);
\draw[black, thick] (-1,2)--(-.6,2.8);
\draw[blue] (-.575,2.85) circle (1pt);
\draw[blue] (-.625,2.75) circle (1pt);
\draw[blue] (-.575,2.85)--(-.625,2.75);
\draw[black, thick] (1,2)--(1.5,3);
\draw[blue] (0.9,2.2) circle (1pt);
\draw[blue] (1,2) -- (0.9,2.2);
\draw[black, thick] (1.5,3)--(1.30,3.8);
\draw[blue] (1.3125,3.75) circle (1pt);
\draw[blue] (1.2825,3.85) circle (1pt);
\draw[blue] (1.325,3.7) -- (1.275,3.9);
\draw[black, thick] (1.5,3)--(1.65,3.6);
\draw[blue] (1.6,3.4) circle (1pt);
\draw[blue] (1.7,3.8) circle (1pt);
\draw[blue] (1.6,3.4) -- (1.7,3.8);
\filldraw[red] (-2pt,-2pt) rectangle (2pt,2pt);
\filldraw[red] (0,1) circle (2pt);
\node[anchor=west] at (0,1) {\tiny\(f\tuple{}=1\)};
\filldraw[red] (1,2) circle (2pt);
\node[anchor=west] at (1,2) {\tiny\(f\tuple{1}=1\)};
\filldraw[red] (-1,2) circle (2pt);
\node[anchor=east] at (-1,2) {\tiny\(f\tuple{0}=1\)};
\filldraw[red] (1.5,3) circle (2pt);
\node[anchor=west] at (1.5,3) {\tiny\(f\tuple{1,1}=1\)};
\filldraw[red] (.5,3) circle (2pt);
\node[anchor=south] at (.5,3) {\rotatebox{15}{\tiny\(f\tuple{1,0}=0\)}};
\filldraw[red] (-.5,3) circle (2pt);
\node[anchor=south] at (-.5,3) {\rotatebox{15}{\tiny\(f\tuple{0,1}=0.8\)}};
\filldraw[red] (-1.5,3) circle (2pt);
\node[anchor=east] at (-1.5,3) {\tiny\(f\tuple{0,0}=0.6\)};
\filldraw[red] (1.75,4) circle (2pt);
\node[anchor=south] at (1.75,4) {\tiny\(0.6\)};
\filldraw[red] (1.25,4) circle (2pt);
\node[anchor=south] at (1.25,4) {\tiny\(0.8\)};
\filldraw[red] (.75,4) circle (2pt);
\node[anchor=south] at (.75,4) {\tiny\(0\)};
\filldraw[red] (.25,4) circle (2pt);
\node[anchor=south] at (.25,4) {\tiny\(0\)};
\filldraw[red] (-.25,4) circle (2pt);
\node[anchor=south] at (-.25,4) {\tiny\(0\)};
\filldraw[red] (-.75,4) circle (2pt);
\node[anchor=south] at (-.75,4) {\tiny\(0\)};
\filldraw[red] (-1.75,4) circle (2pt);
\node[anchor=south] at (-1.75,4) {\tiny\(0\)};
\filldraw[red] (-1.25,4) circle (2pt);
\node[anchor=south] at (-1.25,4) {\tiny\(0\)};
\end{tikzpicture}
\end{center}
\caption{An element of
\(\maProd{[0,1],\gamma,2^{<\omega}}\)
with a typical neighborhood}\label{maCantor}
\end{figure}
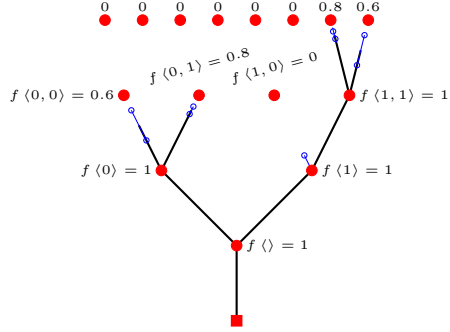

\section{Second-Countability and
the POTS of Downward Subsets}

In order to
demonstrate that \(\maProd{2,\gamma,P}\)
is second-countable exactly
when \(P\) is countable, we will show that it is homeomorphic
to a POTS derived from \(P\).

Recall the well-known compactification
\(\hat{L}=\{A\subseteq L:A\text{ is closed and downward}\}\)
of a given LOTS \(L\), that is, its Dedekind completion.
The requirement that \(A\) be
closed is necessary for \(\{\leof l : l\in L\}\cong L\)
to be dense in \(\hat{L}\). But by dropping
that requirement, we will obtain our desired copy
of \(\maProd{2,\gamma,P}\).

\begin{definition}
Let \(P\) be a partial order. Then we denote
the \term{POTS of downward subsets} by
\(\check{P}=\{A\subseteq P:A\text{ is downward}\}\)
partially ordered by the subset relation \(\subseteq\).
\end{definition}

\begin{lemma}
Let \(x\in P\), then \(\rofs{x}{\not\leq}\) is downward
and \(\lofs{x}{\not\leq}\) is upward.
\end{lemma}

\begin{proof}
To show \(\rofs{x}{\not\leq}\) is downward,
let \(x\in P\), \(b\in\rofs{x}{\not\leq}\) and \(a < b\).
We will show that \(a\in\rofs{x}{\not\leq}\),
that is, \(a\not\geq x\). If \(a\geq x\), then \(x \leq a< b\) implies
\(x\leq b\) by transitivity, contradicting the fact that
\(b\in\rofs{x}{\not\leq}\). The proof that \(\lofs{x}{\not\leq}\) is
upward is similar.
\end{proof}

\begin{proposition}
Let \(P\) be a partial order. Then
\(\tuple{\check P,\subseteq}\)
is a continuous POTS.
\end{proposition}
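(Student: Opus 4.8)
The plan is to verify the ``continuous'' condition directly, exhibiting for each witnessing pair $A\not\subseteq B$ of downward sets a matching pair of subbasic open sets. The natural candidates are the membership sets $U_p=\{C\in\check P:p\in C\}$ and $V_p=\{C\in\check P:p\notin C\}$ for $p\in P$: they are complementary, $U_p$ is upward and $V_p$ is downward under $\subseteq$, and any $C\in U_p$ and $D\in V_p$ satisfy $C\not\subseteq D$ since $p\in C\setminus D$. So the real content is to show $U_p$ and $V_p$ are open, i.e.\ that they come from the POTS subbasis, whose members are the sets $\{C\in\check P:C\not\subseteq A\}$ and $\{C\in\check P:A\not\subseteq C\}$ for downward $A$ (the instances of $x^{\not\leq}$ and $x^{\not\geq}$ for the order $\subseteq$).

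The key observations use two auxiliary downward sets. First, $p^\leq$ is downward, and for downward $C$ one has $p\in C$ iff $p^\leq\subseteq C$; hence $V_p=\{C\in\check P:p^\leq\not\subseteq C\}$ is directly one of the subbasic sets. Second --- and this is the one slightly non-obvious step --- the set $P\setminus p^\geq$ is downward (if $p\leq r\leq q$ then $p\leq q$, so not lying above $p$ is preserved downward), and for downward $C$ one has $C\not\subseteq P\setminus p^\geq$ iff $C$ meets $p^\geq$ iff $p\in C$ (the last equivalence using downwardness of $C$); hence $U_p=\{C\in\check P:C\not\subseteq P\setminus p^\geq\}$ is also one of the subbasic sets. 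Each set-equality is a one-line chase whose only input is downwardness of the sets involved.

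Granting this, the proposition is immediate: $\check P$ is a POTS by construction, and given downward $A\not\subseteq B$ we pick $p\in A\setminus B$ and take $U=U_p\ni A$, $V=V_p\ni B$; both are open, and $C\not\subseteq D$ for all $C\in U$, $D\in V$, which is exactly the defining condition for continuity (equivalently $U$ is upward, $V$ is downward, and $U\cap V=\emptyset$, matching the third clause of the characterization lemma of \cite{1954}). I do not expect a genuine obstacle; the only place to be careful is the bookkeeping of which subbasic set is which, together with the verification that $p^\leq$ and $P\setminus p^\geq$ are downward and hence legitimate indices. If one prefers, one can instead first check that the topology on $\check P$ agrees with the subspace topology it inherits from $2^P$ --- each $U_p$, $V_p$ is clopen there, and conversely every subbasic set is a union of such sets --- and then deduce continuity from the product structure; but the direct argument above is shorter and self-contained.
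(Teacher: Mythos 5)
Your proposal is correct and takes essentially the same route as the paper: both use the subbasic sets $(p^{\not\geq})^{\not\subseteq}$ and $(p^\leq)^{\not\supseteq}$ as the separating upward/downward open neighborhoods. Your observation that these are exactly the complementary membership sets $\{C:p\in C\}$ and $\{C:p\notin C\}$ makes disjointness immediate, a minor streamlining of the paper's argument.
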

\begin{proof}
It is immediate that \(\subseteq\) is a partial order.
To see that it is continuous on \(\check{P}\), we will verify the last
bullet of Lemma \ref{contPots}. Let 
\(X,Y\in \check P\) with \(X\not\subseteq Y\) and \(x\in X\setminus Y\).
Note \(\leof{x},\rofs{x}{\not\leq}\in\check P\), and furthermore 
\(X\in \lofs{(\rofs{x}{\not\leq})}{\not\subseteq}\) since \(x\in X\),
and \(Y\in \rofs{(\leof x)}{\not\subseteq}\) since \(x\not\in Y\).

By the previous lemma these sets are upward and downward respectively.
So we now show these two open sets are disjoint. If
\(A\in\rofs{(\leof x)}{\not\subseteq}\cap
\lofs{(\rofs{x}{\not\leq})}{\not\subseteq}\), then \(A\) misses some
some \(a\leq x\) and contains some \(b\geq x\). But since \(b\geq x\geq a\),
this means \(A\) is not downward, a contradiction.
\end{proof}

\begin{remark}
There is a natural bijection between
elements of \(\maProd{2,\gamma,P}\) and \(\check{P}\).
Each sequence \(x\in\maProd{2,\gamma,P}\) satisfies
the property that \(x(q)=1\) implies \(x(p)=1\) for
all \(p\leq q\), showing that \(\{p\in P:x(p)=1\}\) is
downward. Likewise, the characteristic function
\(f(p)=1\) for \(p\in A\) and \(f(p)=0\) for \(p\not\in A\)
for each downward set \(A\in\check{P}\) belongs
to \(\maProd{2,\gamma,P}\).
\end{remark}

To see that this
bijection is actually a homeomorphism, we first introduce
a convenient basis for \(\check P\) as an alternative
to its POTS subbasis.

\begin{proposition}
Let
\(B\tuple{T,F}=\{A\in\check P:T\subseteq A\text{ and }F\cap A=\emptyset\}\). Then \(\{B\tuple{T,F}:T,F\finSubseteq P\}\)
is a basis for a topology on \(\check P\).
\end{proposition}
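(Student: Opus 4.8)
The plan is to verify the two standard axioms for a family of subsets of a set $X$ to be a basis for a topology: that the family covers $X$, and that it behaves well under intersection. Here $X=\check P$ and the family is $\{B\tuple{T,F}:T,F\finSubseteq P\}$.

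First I would observe that $B\tuple{\emptyset,\emptyset}=\check P$, since the conditions $\emptyset\subseteq A$ and $\emptyset\cap A=\emptyset$ hold vacuously for every $A\in\check P$. As $\emptyset$ is a finite subset of $P$, this already puts $\check P$ itself in the family, so in particular the family covers $\check P$.

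Next I would show the family is closed under finite intersections; this immediately yields the remaining basis axiom, taking the witnessing basic set to be the intersection itself. Concretely, for $T_1,F_1,T_2,F_2\finSubseteq P$ I claim
\[
B\tuple{T_1,F_1}\cap B\tuple{T_2,F_2}=B\tuple{T_1\cup T_2,\ F_1\cup F_2}.
\]
Unwinding the definition, $A$ lies in the left-hand side iff $T_1\subseteq A$, $T_2\subseteq A$, $F_1\cap A=\emptyset$, and $F_2\cap A=\emptyset$, which is equivalent to $T_1\cup T_2\subseteq A$ together with $(F_1\cup F_2)\cap A=\emptyset$, i.e.\ to $A\in B\tuple{T_1\cup T_2,F_1\cup F_2}$. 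Since $T_1\cup T_2$ and $F_1\cup F_2$ are again finite subsets of $P$, the right-hand side is a member of the family, and for any $A\in B\tuple{T_1,F_1}\cap B\tuple{T_2,F_2}$ it serves as the required witness.

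There is essentially no obstacle here: the argument is a direct unwinding of definitions, and the only points deserving a word are that a finite union of finite sets is finite (so the family really is closed under the intersection operation) and that some of the sets $B\tuple{T,F}$ may be empty — for instance when $T\cap F\neq\emptyset$, or when no downward set both contains $T$ and misses $F$ — which is harmless, since a basis is permitted to contain $\emptyset$.
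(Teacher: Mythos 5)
Your proof is correct: the identity \(B\tuple{T_1,F_1}\cap B\tuple{T_2,F_2}=B\tuple{T_1\cup T_2,F_1\cup F_2}\) together with \(B\tuple{\emptyset,\emptyset}=\check P\) verifies both basis axioms, and your remarks about finiteness of unions and possibly empty basic sets are the right things to flag. The paper states this proposition without proof, so there is nothing to compare against; your argument is the standard (and essentially only) one.
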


\begin{theorem}
This basis induces the POTS topology.
\end{theorem}
\begin{proof}

Consider \(T,F\finSubseteq P\). Let \(A\in B\tuple{T,F}\), so for any
\(t\in T\), \(A\in \lofs{(\nreof t)}{\not\subseteq}\), that is,
\(A\not\subseteq\nreof t\) since
\(t\in A\). Likewise,
for any \(f\in F\), \(A\in \rofs{(\leof f)}{\not\subseteq}\), that is,
\(A\not\supseteq\leof f\) since
\(f\not\in A\).
Therefore,
\[A\in \bigcap\left\{\lofs{(\nreof t)}{\not\subseteq}:t\in T\right\}
\cap \bigcap\left\{\rofs{(\leof f)}{\not\subseteq}:f\in F\right\}.\]
 
Now assume
\[A\in \bigcap\left\{\lofs{(\nreof t)}{\not\subseteq}:t\in T\right\}
\cap \bigcap\left\{\rofs{(\leof f)}{\not\subseteq}:f\in F\right\}.\]
Then it follows that (since \(A\) is downward),
\(t\in A\) for all \(t\in T\), 
and similarly \(f\not\in A\) for all \(f\in F\).
Thus, \(A\in B\tuple{T,F}\), showing that 
\[B\tuple{T,F}=
\bigcap\left\{\lofs{(\nreof t)}{\not\subseteq}:t\in T\right\}
\cap \bigcap\left\{\rofs{(\leof f)}{\not\subseteq}:f\in F\right\}\]
and therefore \(B\tuple{T,F}\) is open in the standard POTS topology.

Now consider \(A\in \lofs{R}{\not\subseteq}\) for some \(R\in \check{P}\),
so there exists some \(r\in A\) with \(r\not\in R\).
Therefore \(A\in B\tuple{\{r\},\emptyset}\subseteq R^{\not\subseteq}\).
Similarly, if \(A\in \rofs{R}{\not\subseteq}\), then there exists some
\(r\in R\) with \(r\not\in A\), showing that
\(A\in B\tuple{\emptyset,\{r\}}\subseteq \rofs{R}{\not\subseteq}\).
Therefore \(\lofs{R}{\not\subseteq}\) and \(\rofs{R}{\not\subseteq}\)
are open in the topology generated by the sets \(B\tuple{T,F}\).
\end{proof}

\begin{corollary}
\(\check P \cong \maProd{2,\gamma,P}\).
\end{corollary}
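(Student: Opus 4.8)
The plan is to promote the bijection from Remark~\ref{homRemark} to a homeomorphism. Write $\Phi\colon\maProd{2,\gamma,P}\to\check P$ for the map sending $x$ to $A_x=\{p\in P:x(p)=1\}$; its inverse sends a downward set $A\in\check P$ to its characteristic function $\chi_A$, which lies in $\maProd{2,\gamma,P}$ exactly because $A$ is downward. Since $\Phi$ is already known to be a bijection, all that remains is to see that $\Phi$ and $\Phi^{-1}$ are continuous, and for this it is enough to show that $\Phi$ carries a basis for the subspace topology on $\maProd{2,\gamma,P}$ onto a basis for the POTS topology on $\check P$.

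First I would recall the standard subbasis for the Tychonoff topology on $2^P$, namely the sets $\{x\in 2^P:x(p)=1\}$ and $\{x\in 2^P:x(p)=0\}$ for $p\in P$; intersecting finitely many of these gives the usual basis of ``finite conditions''. Restricting to the subspace $\maProd{2,\gamma,P}$ and applying $\Phi$, one observes $p\in A_x\iff x(p)=1$, so $\Phi$ sends $\{x:x(p)=1\}\cap\maProd{2,\gamma,P}$ to $B\tuple{\{p\},\emptyset}$ and $\{x:x(p)=0\}\cap\maProd{2,\gamma,P}$ to $B\tuple{\emptyset,\{p\}}$. Taking finite intersections, the image under $\Phi$ of the standard basis for the subspace topology on $\maProd{2,\gamma,P}$ is precisely the family $\{B\tuple{T,F}:T,F\finSubseteq P\}$ (noting that if $T\cap F\neq\emptyset$ then $B\tuple{T,F}=\emptyset$, which does no harm). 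By the preceding Proposition this family is a basis for a topology on $\check P$, and by the Theorem immediately above it generates exactly the POTS topology on $\check P$. Hence $\Phi$ maps a basis to a basis, so $\Phi$ is a homeomorphism.

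I do not expect a real obstacle here; the one point deserving care is the coordinatewise correspondence between finite-condition basic open sets of $\maProd{2,\gamma,P}$ and the sets $B\tuple{T,F}$, which is immediate once one unwinds the definition of $A_x$. All the substantive content --- that the $B\tuple{T,F}$ form a basis and that they induce the POTS subbasis topology --- has already been established, so this corollary is essentially bookkeeping built on those results together with Remark~\ref{homRemark}.
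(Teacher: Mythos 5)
Your proposal is correct and follows essentially the same route as the paper: both promote the bijection of Remark~\ref{homRemark} to a homeomorphism by matching the sets \(B\tuple{T,F}\) with the finite-condition basic open sets of the subspace topology on \(\maProd{2,\gamma,P}\), relying on the preceding theorem that the \(B\tuple{T,F}\) generate the POTS topology. Your version is, if anything, slightly more careful than the paper's in explicitly intersecting the product-basic open sets with the subspace.
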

\begin{proof}
The map \(h:\check P\to\maProd{2,\gamma,P}\)
defined by
\[h(A)(p)=\begin{cases}0&p\not\in A\\1&p\in A\end{cases}\]
was earlier shown to be a bijection.
We now consider \(h[B\tuple{T,F}]\). As \(T\subseteq A\) and
\(F\cap A = \emptyset\) for all \(A\in B\tuple{T,F}\),
\(h\) will map the set \(B\tuple{T,F}\) to the set of sequences which are
restricted to \(\{1\}\) on \(T\), and \(\{0\}\) on \(F\).

Thus, let \(U_p = \{1\}\) if \(p\in T\), \(U_p = \{0\}\) if \(p\in F\),
and \(U_p = 2\) otherwise.
Then \(h[B\tuple{T,F}] = \prod_{p\in P}U_p\),
showing \(h\) is an open map. This also shows that \(h\) is continuous;
given nonempty \(\prod_{p\in P}U_p\), let \(T=\{p\in P:U_p=\{1\}\}\)
and \(F=\{p\in P:U_p=\{0\}\}\); then \(h[B\tuple{T,F}]=\prod_{p\in P}U_p\).
\end{proof}

Due to this homeomorphism,
the following theorem completes our
characterization of separable/compact metrizable
\(\maProd{X,f,P}\).

\begin{theorem}
\(\check P\) is second-countable if and only if
\(P\) is countable.
\end{theorem}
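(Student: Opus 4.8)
Here is my plan for proving that $\check P$ is second-countable if and only if $P$ is countable.

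The \emph{if} direction is immediate: when $P$ is countable the basis $\{B\tuple{T,F}:T,F\finSubseteq P\}$ produced above is itself countable (a countable set has only countably many finite subsets, hence only countably many pairs of them), so $\check P$ is second-countable; alternatively one may invoke $\check P\cong\maProd{2,\gamma,P}\subseteq 2^P$ together with the fact that a countable product of second-countable spaces, and every subspace thereof, is second-countable. So the content is the \emph{only if} direction, and the plan there is to show that when $P$ is uncountable the weight of $\check P$ is uncountable. I would attack this directly, rather than by way of a cardinal function bounded by separability, since $\check P$ can be hereditarily separable with no uncountable discrete subspace and no isolated points (the split-interval-like space $\check{\mathbb R}$ is such an example), so an argument via discrete subspaces or isolated points cannot work in general.

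The device is a faithfully $P$-indexed family of ``twin'' points. For each $p\in P$ I would set $A_p^- = p^{\not\geq}$ and $A_p^+ = p^{\not\geq}\cup p^{\leq}$. Both are downward --- $p^{\not\geq}$ because $s\le r\not\geq p$ forces $s\not\geq p$, and $A_p^+$ because it is a union of two downward sets --- so both are points of $\check P$, and by antisymmetry $A_p^+\setminus A_p^- = p^{\leq}\setminus p^{\not\geq} = \{p\}$; thus $A_p^-\subsetneq A_p^+$ are distinct, differing in exactly the coordinate $p$, and $B\tuple{\emptyset,\{p\}}$ is an open set containing $A_p^-$ but not $A_p^+$. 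The key observation I would then establish is: \emph{if a basic open set $B\tuple{T,F}$ contains exactly one of $A_p^-,A_p^+$, then $p\in T\cup F$}. This is short: if $A_p^-\in B\tuple{T,F}$ and $A_p^+\notin B\tuple{T,F}$, then $T\subseteq A_p^-\subseteq A_p^+$ forces the obstruction to be some $f\in F$ with $f\in A_p^+$, and since $F\cap A_p^-=\emptyset$ and $A_p^+=A_p^-\cup\{p\}$ we get $f=p$; the other case is symmetric, with $F\cap A_p^+=\emptyset$ forcing some $t\in T\setminus A_p^-\subseteq\{p\}$.

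To finish I would argue by contradiction. If $\check P$ is second-countable, then by the standard fact that every base of a second-countable space has a countable subfamily that is again a base, there is a countable base $\{B\tuple{T_n,F_n}:n\in\omega\}$ consisting of basic open sets. The set $S=\bigcup_{n\in\omega}(T_n\cup F_n)$ is a countable union of finite sets, hence countable, so since $P$ is uncountable we may fix $p\in P\setminus S$. Because the family is a base and $A_p^-\in B\tuple{\emptyset,\{p\}}$, some $n$ satisfies $A_p^-\in B\tuple{T_n,F_n}\subseteq B\tuple{\emptyset,\{p\}}$, whence $A_p^+\notin B\tuple{T_n,F_n}$; the key observation then gives $p\in T_n\cup F_n\subseteq S$, a contradiction. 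The main obstacle, I expect, is not any single estimate but arranging the setup correctly: recognizing that one must bound $w(\check P)$ itself, and choosing a pair of genuine elements of $\check P$ whose separation by a basic open set is controlled by a single element of $P$. Once the twins $A_p^-,A_p^+$ are identified and checked to be downward, the remainder is routine bookkeeping.
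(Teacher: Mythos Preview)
Your argument is correct, but it takes a genuinely different route from the paper's. For the \emph{only if} direction the paper argues directly that \emph{any} base $\mathcal B$ for $\check P$ must be uncountable: for each $p\in P$ it chooses $B_p\in\mathcal B$ with $p^{\leq}\in B_p\subseteq B\tuple{\{p\},\emptyset}$, and then observes that for $p\neq q$ with (say) $p\not\leq q$ one has $q^{\leq}\in B_q\setminus B_p$, so $p\mapsto B_p$ is injective. Your proof instead builds, for each $p$, a pair of ``twin'' downward sets $A_p^-=p^{\not\geq}$ and $A_p^+=A_p^-\cup\{p\}$ that any separating basic set $B\tuple{T,F}$ must witness via $p\in T\cup F$; you then invoke the lemma that a second-countable space has a countable base drawn from any prescribed base, and reach a contradiction via the countable support $S=\bigcup_n(T_n\cup F_n)$. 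The paper's approach is shorter and needs no auxiliary extraction lemma; it also works uniformly for an arbitrary base rather than first reducing to one consisting of $B\tuple{T,F}$'s. Your twin-point device, on the other hand, makes explicit exactly \emph{which} two points a putative countable base fails to handle, and the support-counting idea generalizes cleanly to computing $w(\check P)=|P|$ for infinite $P$. Both arguments ultimately hinge on the same phenomenon---membership of $p$ in a downward set is detected only by basic sets that mention $p$---but they package it differently.
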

\begin{proof}
If \(P\) is countable, \(2^P\) is second-countable,
and therefore its subspace
\(\maProd{2,\gamma,P}\cong\check P\) is second-countable.

Suppose \(P\) is uncountable and \(\mathcal{B}\) is a basis for 
\(\check{P}\). Then for all \(p\in P\), fix \(B_p\in \mathcal{B}\) such 
that \(\leof p\in B_p\subseteq B\tuple{\{p\},\emptyset}\). Now let 
\(p\neq q\), and without loss of generality assume \(p\not\leq q\). 
Therefore, \(\leof q\not\in B\tuple{\{p\},\emptyset}\), and so 
\(q^{\leq}\not\in B_p\). By construction we have \(q^{\leq} \in B_q\), and
so it must be that \(B_p\neq B_q\). Thus, each \(B_p\) is unique, and so 
\(\mathcal{B}\) must be uncountable, showing that \(\check{P}\) is not second-countable.
\end{proof}

\begin{corollary}\label{sepMet}
Let \(X\) be separable metrizable, \(f\) satisfy
condition \(\Gamma\), and \(P\) be a partial order.
Then \(\maProd{X,f,P}\) is
separable metrizable if and only if \(P\) is countable.
\end{corollary}
\begin{proof}
If \(P\) is countable, \(X^P\) is separable metrizable,
and therefore its subspace
\(\maProd{X,f,P}\) is separable metrizable.

If \(P\) is uncountable, \(\maProd{X,f,P}\) contains
a non-second-countable subspace \(\check P\),
and therefore cannot be second-countable itself.
\end{proof}

Note that Corollary \ref{sepMet} is a generalization
of \cite[Theorem 5.1]{GitC}, as \(\check\alpha=\alpha+1\)
for all ordinals \(\alpha\). We also have the following.

\begin{corollary}
Let \(X\) be compact metrizable, \(f\) satisfy
condition \(\Gamma\), and \(P\) be a partial order.
Then \(\maProd{X,f,P}\) is
compact metrizable if and only if \(P\) is countable.
\end{corollary}

Of course these corollaries cannot extend to preorders
\(\tuple{Q,\preceq}\): if \(p\preceq q\) and \(q\preceq p\)
for all \(p,q\in Q\), then \(\maProd{2,\gamma,Q}\cong 2\)
regardless of the cardinality of \(Q\).




In \cite{GitC} it was also observed that the Corson
compactness of \(\maProd{X,f,\alpha}\) was linked to
the countability of the ordinal \(\alpha\) (as
the same holds for \(\check\alpha=\alpha+1\)).
But the following generalization remains open.

\begin{question}
Let \(P\) be a partial (or linear) order. Is \(\check P\)
Corson compact if and only if \(P\) is countable?
\end{question}

If so, then for Corson compact \(X\) and \(f\)
satisfying condition \(\Gamma\), \(\maProd{X,f,P}\)
would be Corson compact if and only if \(P\) is
countable.

\section{Acknowledgements}

We thank an anonymous referee and Scott Varagona for
their comments that significantly improved this paper.

\bibliographystyle{unsrt}
\bibliography{sample}

\end{document}